\documentclass{birkjour}
 \newtheorem{thm}{Theorem}[section]
 
 \newtheorem{lem}[thm]{Lemma}
 
 \theoremstyle{definition}
 
 \theoremstyle{remark}

 \numberwithin{equation}{section}

 \newcommand{\arcangle}{%
  \mathord{<\mspace{-9mu}\mathrel{)}\mspace{2mu}}%
}

  \usepackage{cite}
  \usepackage{float}

\usepackage{mathrsfs}
\usetikzlibrary{arrows}
\usepackage{pgfplots}
  
\usepackage{hyperref}
\hypersetup{
	colorlinks,
	linkcolor = {blue},
	citecolor = {blue},
	filecolor = {blue},
	urlcolor = {blue} 
} 

\usepackage{subcaption}

\begin{document}

%
%
%
%
%
%
%
%
%

\title[The Seven Circles Theorem for Arbitrary Closed Six-Circle Chains]
 {When Points Become Circles: The Seven Circles Theorem for Arbitrary Closed Six-Circle Chains}

\author[Miłosz Płatek]{Miłosz Płatek}

\address{Independent researcher\\ Krakow, Poland\\Current affiliation: University of California, Berkeley \\Berkeley, CA, USA}

\email{milosz\_platek@berkeley.edu}

\subjclass{Primary 51M05; Secondary 51M15, 51B15}

\keywords{}


\begin{abstract}
    The Seven Circles Theorem states that if six circles form a closed chain and are tangent to a common circle, then the three lines joining opposite points of tangency on the common circle are concurrent. We extend this result to an arbitrary closed six-circle chain. Given such a chain, we consider two circles, each tangent to one of the two alternating triples of circles in the chain. For each circle in the chain, we then construct a circle tangent to it and to both of these circles. We prove that the three lines joining the centers of the newly constructed circles corresponding to opposite members of the chain are concurrent. In the classical configuration, the two circles associated with the alternating triples coincide with the common circle, while the six newly constructed circles degenerate to the six points of tangency, viewed as circles of radius zero. Thus, the classical Seven Circles Theorem is recovered as a degenerate case of our generalization.

\end{abstract}

\maketitle

\section{Introduction}

The Seven Circles Theorem gives a striking concurrence in a simple configuration of tangent circles.

\begin{thm}[Seven Circles Theorem]
Let $\omega_1,\ldots,\omega_6$ form a closed chain, with each circle tangent to its two neighbors, and suppose that all six circles are tangent to a common circle $\Omega$ at points $S_1,\ldots,S_6$, respectively. Then the lines $S_1S_4$, $S_2S_5$, and $S_3S_6$ are concurrent; see Figure~\ref{basic}.
\end{thm}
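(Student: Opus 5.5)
We will prove the classical Seven Circles Theorem with the standard chord-length computation, finishing with the trigonometric form of Ceva's theorem applied to the hexagon $S_1S_2\cdots S_6$ inscribed in $\Omega$. Assume first that all six circles are internally tangent to $\Omega$; the other tangency types are handled by the same computation with signed radii. Let $\Omega$ have center $O$ and radius $R$, and let $\omega_i$ have radius $r_i$. Then the center $O_i$ of $\omega_i$ lies on segment $OS_i$ with $OO_i=R-r_i$.

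\textbf{Step 1: a formula for adjacent chords.} Since $\omega_i$ and $\omega_{i+1}$ are tangent, $O_iO_{i+1}=r_i+r_{i+1}$. Let $\theta=\angle S_iOS_{i+1}$. The law of cosines in triangle $OO_iO_{i+1}$ gives
\[
(r_i+r_{i+1})^2=(R-r_i)^2+(R-r_{i+1})^2-2(R-r_i)(R-r_{i+1})\cos\theta .
\]
Substitute $1-\cos\theta=2\sin^2(\theta/2)$ and $S_iS_{i+1}=2R\sin(\theta/2)$. After simplification this becomes
\[
S_iS_{i+1}^2=\frac{4R^2\,r_ir_{i+1}}{(R-r_i)(R-r_{i+1})}.
\]
Writing $t_i=\sqrt{r_i/(R-r_i)}$, we get $S_iS_{i+1}=2R\,t_it_{i+1}$. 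The algebra is routine: expand both sides and cancel the $R^2$ and $r_i^2$ terms.

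\textbf{Step 2: the Ceva product.} For a convex hexagon $S_1\ldots S_6$ inscribed in a circle, the diagonals $S_1S_4,S_2S_5,S_3S_6$ are concurrent if and only if
\[
S_1S_2\cdot S_3S_4\cdot S_5S_6=S_2S_3\cdot S_4S_5\cdot S_6S_1 .
\]
This follows from trigonometric Ceva in triangle $S_1S_3S_5$ with cevians $S_1S_4,S_3S_6,S_5S_2$. By the inscribed-angle theorem each sine ratio equals a ratio of chords, for instance $\sin\angle S_3S_1S_4/\sin\angle S_4S_1S_5=S_3S_4/S_4S_5$. By Step 1, each side of the displayed product equals $8R^3t_1t_2\cdots t_6$, so the two sides agree and the lines are concurrent.

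\textbf{Main obstacle.} The computation is easy; the hard part is justifying the Ceva step in full generality. Ceva requires the points $S_i$ to appear on $\Omega$ in the cyclic order $S_1,\ldots,S_6$, so that $S_2,S_4,S_6$ lie on the correct arcs. It also requires handling chains where some circles are externally tangent to $\Omega$, or contain it. In those cases I would replace $R-r_i$ by $R\pm r_i$ and treat $r_i+r_{i+1}$ as $|r_i\pm r_{i+1}|$, using signed radii. Step 1 then still yields $S_iS_{i+1}=2R\,|t_it_{i+1}|$ with the modified $t_i$, and the product identity is unchanged.

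The ordering issue is what I expect to be genuinely delicate. I would first try to argue that a closed chain forces the tangency points to be cyclically ordered around $\Omega$. If that fails, I would switch to a projective form of Ceva, namely concurrency of the three chords as an involution condition on the conic, which is insensitive to ordering and to whether the concurrency point is interior.
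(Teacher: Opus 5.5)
Your Steps 1 and 2 are correct. Together they give a complete proof whenever $S_1,\dots,S_6$ lie on $\Omega$ in this cyclic order, which is the configuration the classical theorem and Figure~\ref{basic} intend. The tangency types are also harmless. In every nondegenerate case the law of cosines gives $S_iS_{i+1}^2=4R^2r_ir_{i+1}/(OO_i\cdot OO_{i+1})$. So $t_i^2=r_i/OO_i$ depends on $\omega_i$ alone, and the telescoping is unchanged.

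Your last paragraph needs correcting. Neither proposed repair can work, because without the ordering hypothesis the statement is false.

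First, a closed chain does not force the cyclic order once the circles may overlap. Second, no ordering-free form of Ceva can be fed by Step~1, since it only gives unsigned chord lengths. To see this, run Step~1 backwards. For any six distinct points on $\Omega$ and any $t_1>0$, set $t_{i+1}=S_iS_{i+1}/(2Rt_i)$ and $r_i=Rt_i^2/(1+t_i^2)$. This produces circles internally tangent to $\Omega$ at the $S_i$, with consecutive ones externally tangent, and the chain closes exactly when $S_1S_2\cdot S_3S_4\cdot S_5S_6=S_2S_3\cdot S_4S_5\cdot S_6S_1$. So your identity is precisely the closing condition of the chain (this is the porism).

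Concurrency, by contrast, is the signed identity $(s_1-s_2)(s_3-s_4)(s_5-s_6)=-(s_2-s_3)(s_4-s_5)(s_6-s_1)$, where $s_i$ is the complex coordinate of $S_i$. For points on a circle, the ratio of the two sides is real, and its absolute value is your ratio of products. Its sign, however, is governed by the arrangement of the points.

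Here is a concrete counterexample. On the unit circle, place $S_1,\dots,S_6$ at $0^\circ,130^\circ,260^\circ,10^\circ,120^\circ,240^\circ$. Both products equal $8\sin65^\circ\sin55^\circ\sin60^\circ$, so a closed, heavily overlapping chain touching $\Omega$ at these points exists. Yet $S_1S_4$, $S_2S_5$, $S_3S_6$ are short chords near $5^\circ$, $125^\circ$, $250^\circ$, and they do not concur.

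So the cyclic order must be assumed. If you want to derive it, in the internal case it suffices that the six disks have pairwise disjoint interiors. The contacts then form a non-crossing Hamiltonian cycle on six points in convex position, and such a cycle must be the boundary hexagon.

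The paper does not prove this statement directly. It recovers it as the degenerate case of Theorem~\ref{thm:main} in which $\Omega_1=\Omega_2=\Omega$ with opposite orientations and each $\Gamma_i$ shrinks to the point $S_i$. That theorem's proof is transformational:
\begin{itemize}
\item a Laguerre dilatation collapses $\Omega_1$ to a point;
\item inversion at that point turns the $\Gamma_i$ into tangent lines of $\Omega_2$;
\item Lemma~\ref{lemma1} reduces everything to the concurrence of $X_1X_4$, $X_2X_5$, $X_3X_6$;
\item Monge's theorem and Steinbart's theorem finish the argument.
\end{itemize}

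Your metric route is much shorter and more transparent: it exhibits the porism and isolates exactly where the ordering enters. But it uses the common circle twice, for the chord lengths and for the inscribed angles, so it has no analogue for an arbitrary chain. Handling arbitrary chains is what the paper's method buys.

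The ordering issue does not vanish in the paper's route either. Its final step passes from concurrence of $A'X_4,B'X_2,C'X_6$ to concurrence of $X_1X_4,X_5X_2,X_3X_6$. There the vertex sine ratios are squares of the contact chord ratios, so this step also determines the relevant Ceva product only up to sign.
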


\begin{figure}[ht]
    \centering

    \begin{minipage}[t]{0.47\textwidth}
        \vspace{0pt}
        \centering
        \pgfplotsset{compat=1.15}
\definecolor{qqwuqq}{rgb}{0,0.39215686274509803,0}
\definecolor{ccqqqq}{rgb}{0.8,0,0}
\definecolor{qqqqff}{rgb}{0,0,1}
\begin{tikzpicture}[scale=0.215][line cap=round,line join=round,>=triangle 45,x=1cm,y=1cm]
\clip(-19.135779007623274,-12.759592993186665) rectangle (11.77783493422913,13.639515435599671);
\draw [line width=0.4pt,color=ccqqqq] (-0.78,0.91) circle (4.14004830889689cm);
\draw [line width=0.4pt,color=qqwuqq] (-0.8130227655199813,-5.925712462636118) circle (2.6957439184079544cm);
\draw [line width=0.4pt,color=qqwuqq] (5.765324852054217,-4.132764998517105) circle (4.122563641884037cm);
\draw [line width=0.4pt,color=qqwuqq] (6.871167184926429,4.043120306481405) circle (4.127768945166069cm);
\draw [line width=0.4pt,color=qqwuqq] (0.344358482040811,9.10542364719416) circle (4.132143102883082cm);
\draw [line width=0.4pt,color=qqwuqq] (-7.0473880274231195,5.952439104590497) circle (3.903977101915168cm);
\draw [line width=0.4pt,color=qqwuqq] (-10.94249369507422,-4.76564972799046) circle (7.499937856784813cm);
\draw [line width=0.4pt,dash pattern=on 1pt off 1pt] (-0.8,-3.23)-- (-0.21728352011558283,5.011627745574493);
\draw [line width=0.4pt,dash pattern=on 1pt off 1pt] (-4.394541653022943,-1.1086848784696857)-- (3.051265349929664,2.478886808673053);
\draw [line width=0.4pt,dash pattern=on 1pt off 1pt] (2.4995877679293326,-1.616718043717639)-- (-4.005659775919862,3.5052107833493245);
\begin{scriptsize}
\draw [fill=qqqqff] (-0.8,-3.23) circle (2.5pt);
\draw[color=qqqqff] (-0.4758228869501947,-2.2902397319334513) node {$S_4$};
\draw[color=qqwuqq] (-0.4758228869501947,-7.6902397319334513) node {$\omega_4$};
\draw [fill=qqqqff] (2.4995877679293326,-1.616718043717639) circle (2.5pt);
\draw[color=qqqqff] (3.6348144892982545,-1.3994139797101704) node {$S_5$};
\draw[color=qqwuqq] (5.8348144892982545,-1.6994139797101704) node {$\omega_5$};
\draw [fill=qqqqff] (3.051265349929664,2.478886808673053) circle (2.5pt);
\draw[color=qqqqff] (3.3937532671064345,3.3851386273496047) node {$S_6$};
\draw[color=qqwuqq] (9.3937532671064345,3.3851386273496047) node {$\omega_6$};
\draw [fill=qqqqff] (-0.21728352011558283,5.011627745574493) circle (2.5pt);
\draw[color=qqqqff] (0.12611118145861427,5.9218607727867285) node {$S_1$};
\draw[color=qqwuqq] (0.12611118145861427,11.9218607727867285) node {$\omega_1$};
\draw [fill=qqqqff] (-4.005659775919862,3.5052107833493245) circle (2.5pt);
\draw[color=qqqqff] (-3.6574743913967565,4.417025601764705) node {$S_2$};
\draw[color=qqwuqq] (-9.1574743913967565,4.417025601764705) node {$\omega_2$};
\draw [fill=qqqqff] (-4.394541653022943,-1.1086848784696857) circle (2.5pt);
\draw[color=qqqqff] (-4.0444320068024195,-0.18347049250261982) node {$S_3$};
\draw[color=qqwuqq] (-15.0444320068024195,-0.18347049250261982) node {$\omega_3$};
\draw[color=ccqqqq] (-2.5444320068024195,5.58347049250261982) node {$\Omega$};
\end{scriptsize}
\end{tikzpicture}

        \smallskip
        \textbf{(a)}
    \end{minipage}
    \hfill
    \begin{minipage}[t]{0.47\textwidth}
        \vspace{0pt}
        \centering
        \pgfplotsset{compat=1.15}
\definecolor{wewdxt}{rgb}{0.43137254901960786,0.42745098039215684,0.45098039215686275}
\definecolor{qqqqff}{rgb}{0,0,1}
\definecolor{qqwuqq}{rgb}{0,0.39215686274509803,0}
\definecolor{ccqqqq}{rgb}{0.8,0,0}
\begin{tikzpicture}[scale=0.7][line cap=round,line join=round,>=triangle 45,x=1cm,y=1cm]
\clip(-2.586841066045264,-4.748365035483542) rectangle (5.359783611741766,3.5287951574973557);
\draw [line width=0.4pt,color=ccqqqq] (1.4574282727260166,-0.5091548261422036) circle (3.553477170322049cm);
\draw [line width=0.4pt,color=ccqqqq] (1.4574282727260166,-0.5091548261422036) circle (3.5534771703220493cm);
\draw [line width=0.4pt,color=qqwuqq] (0.09444736030275758,1.7265250022188288) circle (0.9350842925112046cm);
\draw [line width=0.4pt,color=qqwuqq] (3.7726639060404343,-1.397693672656407) circle (1.0735946490050405cm);
\draw [line width=0.4pt,color=qqwuqq] (-0.12659836136365832,-1.4453490487768734) circle (1.713477170322049cm);
\draw [line width=0.4pt,color=qqwuqq] (-1.1519827582655342,0.7099698446723841) circle (0.673322505785931cm);
\draw [line width=0.4pt,color=qqwuqq] (2.577854005114257,1.0402606130274055) circle (1.6413992365194365cm);
\draw [line width=0.4pt,color=qqwuqq] (2.2428756801250356,-2.879616439170117) circle (1.056275558791911cm);
\draw [line width=0.4pt,dash pattern=on 1pt off 1pt] (-0.39230237576631954,2.5249342509990917)-- (2.575107124843546,-3.8822830453964365);
\draw [line width=0.4pt,dash pattern=on 1pt off 1pt] (-1.6017035106542763,-2.3171683028376915)-- (3.5396693732051228,2.370336795563883);
\draw [line width=0.4pt,dash pattern=on 1pt off 1pt] (-1.7620108222603745,0.9949767883361316)-- (4.774979364835675,-1.7823613102766558);
\begin{scriptsize}
\draw[color=qqwuqq] (-0.43836021728481933,2.1584500007559257) node {$\omega_1$};
\draw[color=qqwuqq] (3.5005213387759957,-0.6235572521261727) node {$\omega_5$};
\draw[color=qqwuqq] (-0.9203911769426114,-0.19661554500070225) node {$\omega_3$};
\draw [fill=qqqqff] (-1.6017035106542763,-2.3171683028376915) circle (2.5pt);
\draw[color=qqqqff] (-1.925770035657435,-2.1798286361641788) node {$S_3$};
\draw [fill=qqqqff] (-0.39230237576631954,2.5249342509990917) circle (2.5pt);
\draw[color=qqqqff] (-0.5623110354825372,3.039878041273026) node {$S_1$};
\draw [fill=qqqqff] (4.774979364835675,-1.7823613102766558) circle (2.5pt);
\draw[color=qqqqff] (4.891524965217054,-2.0421055048333816) node {$S_5$};
\draw[color=qqwuqq] (-1.250926692136526,1.1255265157749486) node {$\omega_2$};
\draw[color=ccqqqq] (-1.250926692136526,2.1255265157749486) node {$\Omega$};
\draw [fill=qqqqff] (-1.6017035106542763,-2.3171683028376915) circle (0.5pt);
\draw[color=qqwuqq] (1.758625187011748,2.1584500007559257) node {$\omega_6$};
\draw[color=qqwuqq] (2.0682007729356995,-1.6223514138447952) node {$\omega_4$};
\draw [fill=qqqqff] (-0.39230237576631954,2.5249342509990917) circle (0.5pt);
\draw [fill=qqqqff] (4.774979364835675,-1.7823613102766558) circle (0.5pt);
\draw [fill=qqqqff] (-1.7620108222603745,0.9949767883361316) circle (2.5pt);
\draw[color=qqqqff] (-2.0338501771175093,1.1602587581779922) node {$S_2$};
\draw [fill=qqqqff] (-1.7620108222603745,0.9949767883361316) circle (0.5pt);
\draw [fill=qqqqff] (2.575107124843546,-3.8822830453964365) circle (2.5pt);
\draw[color=qqqqff] (2.3711916618634543,-4.107952474795336) node {$S_4$};
\draw [fill=qqqqff] (3.5396693732051228,2.370336795563883) circle (2.5pt);
\draw[color=qqqqff] (3.5969275307075543,2.86083797054299) node {$S_6$};
\draw [fill=qqqqff] (3.5396693732051228,2.370336795563883) circle (0.5pt);
\draw [fill=qqqqff] (2.575107124843546,-3.8822830453964365) circle (0.5pt);
\draw [fill=qqqqff] (-0.39230237576631954,2.5249342509990917) circle (2.5pt);
\end{scriptsize}
\end{tikzpicture}

        \smallskip
        \textbf{(b)}
    \end{minipage}

    \caption{The Seven Circles Theorem with the six circles tangent to the common circle (a) externally and (b) internally.}
    \label{basic}
\end{figure}

At first sight, removing the common circle seems to remove the theorem along with it. But does a related concurrence persist for an arbitrary closed chain of six circles?

We show that it does. The key is to reinterpret the classical configuration. We replace the role of the common circle with two circles that coincide in the classical case, and regard the six points of tangency as circles of radius zero.

The Seven Circles Theorem first appeared in the 1974 book by Evelyn, Money-Coutts, and Tyrrell~\cite{evelyn}. Subsequent proofs have revealed different structures underlying the concurrence. Cundy and Rabinowitz gave elementary proofs~\cite{cundy,rabin}, Brown related the theorem to Brianchon's theorem~\cite{brown}, and Drach and Schwartz gave a hyperbolic interpretation~\cite{hyperbolic}. These approaches retain the common circle, whereas we begin with an arbitrary closed six-circle chain.

\section{From a Six-Circle Chain to the General Configuration}

Let $\omega_1,\ldots,\omega_6$ form an arbitrary closed chain, indexed in cyclic order, as shown in Figure~\ref{any_chain}.

\begin{figure}[ht]
    \centering
    \pgfplotsset{compat=1.15}
\definecolor{wewdxt}{rgb}{0.43137254901960786,0.42745098039215684,0.45098039215686275}
\definecolor{qqwuqq}{rgb}{0,0.39215686274509803,0}
\begin{tikzpicture}[scale=0.3][line cap=round,line join=round,>=triangle 45,x=1cm,y=1cm]
\clip(-10.32808933268825,-8.263519374882472) rectangle (7.825739420056494,10.395320719445909);
\draw [line width=0.4pt,color=qqwuqq] (0.9237346814822205,3.8254590283722223) circle (1.2874247901002658cm);
\draw [line width=0.4pt,color=qqwuqq] (-0.9587485739145309,1.0553473646319262) circle (1.1703378296922424cm);
\draw [line width=0.4pt,color=qqwuqq] (2.842100279247295,0.28475678550565386) circle (1.2888467480222292cm);
\draw [line width=0.4pt,color=qqwuqq] (-0.10665297235761506,-3.7422306366426854) circle (3.7023227979858664cm);
\draw [line width=0.4pt,color=qqwuqq] (4.779697020900073,3.626144718424509) circle (2.5736853814901486cm);
\draw [line width=0.4pt,color=qqwuqq] (-4.743874904406102,5.406073047853568) circle (4.5964633033874955cm);
\begin{scriptsize}
\draw[color=qqwuqq] (1.0479556251078705,4.56111074693155) node {$\omega_1$};
\draw[color=qqwuqq] (-0.9986692855236743,1.7968381403643825) node {$\omega_3$};
\draw[color=qqwuqq] (2.828785092800254,1.05261090013476) node {$\omega_5$};
\draw[color=qqwuqq] (-3.0718737404491354,-0.43584358032448395) node {$\omega_4$};
\draw[color=qqwuqq] (3.8653873202629847,5.491394797218579) node {$\omega_6$};
\draw[color=qqwuqq] (-5.384294094019842,9.478326441305839) node {$\omega_2$};
\end{scriptsize}
\end{tikzpicture}
    \caption{An arbitrary closed six-circle chain.}
    \label{any_chain}
\end{figure}

The cyclic order divides the chain naturally into the two alternating triples
\[
\omega_1,\omega_3,\omega_5
\qquad\text{and}\qquad
\omega_2,\omega_4,\omega_6.
\]

Let $\Omega_1$ and $\Omega_2$ be circles tangent to the members of the first and second triples, respectively; see Figure~\ref{opisane}.

The orientation conventions and the precise choices of tangent circles will be specified in the following section.

\begin{figure}[ht]
    \centering

    \begin{minipage}[t]{0.47\textwidth}
        \vspace{0pt}
        \centering
        \pgfplotsset{compat=1.15}
\definecolor{ccqqqq}{rgb}{0.8,0,0}
\definecolor{wewdxt}{rgb}{0.43137254901960786,0.42745098039215684,0.45098039215686275}
\definecolor{qqwuqq}{rgb}{0,0.39215686274509803,0}
\begin{tikzpicture}[scale=0.335][line cap=round,line join=round,>=triangle 45,x=1cm,y=1cm]
\clip(-9.78769741060756,-7.844030074907152) rectangle (7.67510624384702,10.298004336495383);
\draw [line width=0.4pt,color=qqwuqq] (0.9237346814822205,3.8254590283722223) circle (1.2874247901002658cm);
\draw [line width=0.4pt,color=qqwuqq] (-0.9587485739145309,1.0553473646319262) circle (1.1703378296922424cm);
\draw [line width=0.4pt,color=qqwuqq] (2.842100279247295,0.28475678550565386) circle (1.2888467480222292cm);
\draw [line width=0.4pt,color=qqwuqq] (-0.10665297235761506,-3.7422306366426854) circle (3.7023227979858664cm);
\draw [line width=0.4pt,color=qqwuqq] (4.779697020900073,3.626144718424509) circle (2.5736853814901486cm);
\draw [line width=0.4pt,color=qqwuqq] (-4.743874904406102,5.406073047853568) circle (4.5964633033874955cm);
\draw [line width=0.4pt,color=ccqqqq] (1.0655646966686945,1.6131537181838893) circle (0.9294221912440292cm);
\draw [line width=0.4pt,color=ccqqqq] (0.6797626122206704,1.8137042156415542) circle (1.908992694346952cm);
\begin{scriptsize}
\draw[color=qqwuqq] (0.5646807439511828,5.550076438777027) node {$\omega_1$};
\draw[color=qqwuqq] (-2.0646807439511828,9.950076438777027) node {$\omega_2$};
\draw[color=qqwuqq] (-2.6646807439511828,0.650076438777027) node {$\omega_3$};
\draw[color=qqwuqq] (-2.0646807439511828,-1.950076438777027) node {$\omega_4$};
\draw[color=qqwuqq] (4.7646807439511828,0.450076438777027) node {$\omega_5$};
\draw[color=qqwuqq] (6.3646807439511828,3.950076438777027) node {$\omega_6$};
\draw[color=ccqqqq] (0.6646807439511828,4.1076438777027) node {$\Omega_2$};
\draw[color=ccqqqq] (-0.0646807439511828,2.450076438777027) node {$\Omega_1$};
\end{scriptsize}
\end{tikzpicture}

        \smallskip
        \textbf{(a)}
    \end{minipage}
    \hfill
    \begin{minipage}[t]{0.47\textwidth}
        \vspace{0pt}
        \centering
        \pgfplotsset{compat=1.15}
\definecolor{wewdxt}{rgb}{0.43137254901960786,0.42745098039215684,0.45098039215686275}
\definecolor{qqwuqq}{rgb}{0,0.39215686274509803,0}
\definecolor{ccqqqq}{rgb}{0.8,0,0}
\begin{tikzpicture}[scale=0.3][line cap=round,line join=round,>=triangle 45,x=1cm,y=1cm]
\clip(-11.90714257823503,-8.521586859071665) rectangle (8.282208622867046,11.938560408810387);
\draw [line width=0.4pt,color=ccqqqq] (1.2181800805352183,1.6940869099495046) circle (3.4390393467096096cm);
\draw [line width=0.4pt,color=ccqqqq] (-1.74,1.69) circle (9.374795997780433cm);
\draw [line width=0.4pt,color=qqwuqq] (0.9237346814822205,3.8254590283722223) circle (1.2874247901002658cm);
\draw [line width=0.4pt,color=qqwuqq] (-0.9587485739145309,1.0553473646319262) circle (1.1703378296922424cm);
\draw [line width=0.4pt,color=qqwuqq] (2.842100279247295,0.28475678550565386) circle (1.2888467480222292cm);
\draw [line width=0.4pt,color=qqwuqq] (-0.10665297235761506,-3.7422306366426854) circle (3.7023227979858664cm);
\draw [line width=0.4pt,color=qqwuqq] (4.779697020900073,3.626144718424509) circle (2.5736853814901486cm);
\draw [line width=0.4pt,color=qqwuqq] (-4.743874904406102,5.406073047853568) circle (4.5964633033874955cm);
\begin{scriptsize}
\draw[color=ccqqqq] (-2.0646807439511828,3.950076438777027) node {$\Omega_1$};
\draw[color=ccqqqq] (-5.016887101256826,11.201393338129341) node {$\Omega_2$};
\draw[color=qqwuqq] (1.1512455309725713,4.521757024203143) node {$\omega_1$};
\draw[color=qqwuqq] (-0.92485764768025,1.7536194526661597) node {$\omega_3$};
\draw[color=qqwuqq] (2.896375739115523,1.0314966079173815) node {$\omega_5$};
\draw[color=qqwuqq] (-3.211579989384807,-0.412749081580175) node {$\omega_4$};
\draw[color=qqwuqq] (3.919383102509667,5.454499032003649) node {$\omega_6$};
\draw[color=qqwuqq] (-5.167329360579495,9.486351581850993) node {$\omega_2$};
\end{scriptsize}
\end{tikzpicture}

        \smallskip
        \textbf{(b)}
    \end{minipage}

    \caption{Two tangency patterns for a circle tangent to an alternating triple: (a) external tangency and (b) internal tangency.}
    \label{opisane}
\end{figure}

In the classical Seven Circles configuration, $\Omega_1$ and $\Omega_2$ coincide with the common circle $\Omega$. Moreover, $S_i$ is the point at which $\omega_i$ touches $\Omega$. Viewed as a circle of radius zero, $S_i$ is tangent to $\omega_i$ and to both $\Omega_1$ and $\Omega_2$.

Motivated by this observation, we make the analogous construction for a general chain. For each $i$, let $\Gamma_i$ be a circle tangent to $\omega_i$, $\Omega_1$, and $\Omega_2$, and let $S_i$ denote its center; see Figure~\ref{final}. In the classical configuration, $\Gamma_i$ degenerates to a point circle, and its center $S_i$ becomes the corresponding point of tangency.

\begin{figure}[ht]
    \centering

    \begin{minipage}[t]{0.47\textwidth}
        \vspace{0pt}
        \centering
        \pgfplotsset{compat=1.15}
\definecolor{qqqqff}{rgb}{0,0,1}
\definecolor{ccqqqq}{rgb}{0.8,0,0}
\definecolor{wewdxt}{rgb}{0.43137254901960786,0.42745098039215684,0.45098039215686275}
\definecolor{qqwuqq}{rgb}{0,0.39215686274509803,0}
\begin{tikzpicture}[scale=0.37][line cap=round,line join=round,>=triangle 45,x=1cm,y=1cm]
\clip(-9.724120758729262,-7.8027266074507775) rectangle (7.591253005400042,10.240554588800405);
\draw [line width=0.4pt,color=qqwuqq] (0.9237346814822205,3.8254590283722223) circle (1.2874247901002658cm);
\draw [line width=0.4pt,color=qqwuqq] (-0.9587485739145309,1.0553473646319262) circle (1.1703378296922424cm);
\draw [line width=0.4pt,color=qqwuqq] (2.842100279247295,0.28475678550565386) circle (1.2888467480222292cm);
\draw [line width=0.4pt,color=qqwuqq] (-0.10665297235761506,-3.7422306366426854) circle (3.7023227979858664cm);
\draw [line width=0.4pt,color=qqwuqq] (4.779697020900073,3.626144718424509) circle (2.5736853814901486cm);
\draw [line width=0.4pt,color=qqwuqq] (-4.743874904406102,5.406073047853568) circle (4.5964633033874955cm);
\draw [line width=0.4pt,color=ccqqqq] (1.0655646966686945,1.6131537181838893) circle (0.9294221912440292cm);
\draw [line width=0.4pt,color=ccqqqq] (0.6797626122206704,1.8137042156415542) circle (1.908992694346952cm);
\draw [line width=0.4pt,color=qqqqff] (0.969269795008695,3.115189249988558) circle (0.5756968893902743cm);
\draw [line width=0.4pt,color=qqqqff] (-0.4395132594662024,1.198424407892322) circle (0.6317504625191979cm);
\draw [line width=0.4pt,color=qqqqff] (2.0297361797296145,0.8921985888670332) circle (0.2744893284429919cm);
\draw [line width=0.4pt,color=qqqqff] (-0.3229287889988095,2.4778386117803186) circle (0.7063021513500902cm);
\draw [line width=0.4pt,color=qqqqff] (2.0815528528701885,2.4333876435472703) circle (0.3763402779135702cm);
\draw [line width=0.4pt,color=qqqqff] (0.4751122412725761,0.36787309176683747) circle (0.4487497648677442cm);
\draw [line width=0.4pt,dash pattern=on 1pt off 1pt] (0.969269795008695,3.115189249988558)-- (0.4751122412725761,0.36787309176683747);
\draw [line width=0.4pt,dash pattern=on 1pt off 1pt] (-0.4395132594662024,1.198424407892322)-- (2.0815528528701885,2.4333876435472703);
\draw [line width=0.4pt,dash pattern=on 1pt off 1pt] (2.0297361797296145,0.8921985888670332)-- (-0.3229287889988095,2.4778386117803186);
\begin{scriptsize}
\draw[color=qqwuqq] (0.9739127133761044,4.693017643987236) node {$\omega_1$};
\draw[color=qqwuqq] (-2.5053702092739496,0.7534804833742224) node {$\omega_3$};
\draw[color=qqwuqq] (3.10559825318614,-0.516763837623848976) node {$\omega_5$};
\draw[color=qqwuqq] (-2.8641446559977797,-0.4685077837912195) node {$\omega_4$};
\draw[color=qqwuqq] (3.8193690389463977,5.553271881950312) node {$\omega_6$};
\draw[color=qqwuqq] (-5.665485379621247,9.43544485327086) node {$\omega_2$};
\draw[color=ccqqqq] (1.591531140631672,1.9137347213372986) node {$\Omega_1$};
\draw[color=ccqqqq] (0.225427271519339906,4.1077460225721675) node {$\Omega_2$};
\draw[color=qqqqff] (1.60188967930664492,3.9121853962900383) node {$\Gamma_1$};
\draw[color=qqqqff] (-1.5142489567096407,1.0269833086829397) node {$\Gamma_3$};
\draw[color=qqqqff] (2.922247523126296,0.6329024736401005) node {$\Gamma_5$};
\draw[color=qqqqff] (-1.5245379615767064,2.9725091680610842) node {$\Gamma_2$};
\draw[color=qqqqff] (2.997380771780892,2.5107751388587183) node {$\Gamma_6$};
\draw[color=qqqqff] (0.4004098880673631,-0.5146736626979983) node {$\Gamma_4$};
\draw [fill=qqqqff] (-0.3229287889988095,2.4778386117803186) circle (2.5pt);
\draw [fill=qqqqff] (0.969269795008695,3.115189249988558) circle (2.5pt);
\draw [fill=qqqqff] (2.0815528528701885,2.4333876435472703) circle (2.5pt);
\draw [fill=qqqqff] (2.0297361797296145,0.8921985888670332) circle (2.5pt);
\draw [fill=qqqqff] (0.4751122412725761,0.36787309176683747) circle (2.5pt);
\draw [fill=qqqqff] (-0.4395132594662024,1.198424407892322) circle (2.5pt);
\end{scriptsize}
\end{tikzpicture}

        \smallskip
        \textbf{(a)}
    \end{minipage}
    \hfill
    \begin{minipage}[t]{0.47\textwidth}
        \vspace{0pt}
        \centering
        \pgfplotsset{compat=1.15}
\definecolor{wewdxt}{rgb}{0.43137254901960786,0.42745098039215684,0.45098039215686275}
\definecolor{qqqqff}{rgb}{0,0,1}
\definecolor{qqwuqq}{rgb}{0,0.39215686274509803,0}
\definecolor{ccqqqq}{rgb}{0.8,0,0}
\begin{tikzpicture}[scale=0.3][line cap=round,line join=round,>=triangle 45,x=1cm,y=1cm]
\clip(-12.090457076643636,-8.432693824339331) rectangle (8.574098730913951,11.765042198408151);
\draw [line width=0.4pt,color=ccqqqq] (1.2181800805352183,1.6940869099495046) circle (3.4390393467096096cm);
\draw [line width=0.4pt,color=ccqqqq] (-1.74,1.69) circle (9.374795997780433cm);
\draw [line width=0.4pt,color=qqwuqq] (0.9237346814822205,3.8254590283722223) circle (1.2874247901002658cm);
\draw [line width=0.4pt,color=qqwuqq] (-0.9587485739145309,1.0553473646319262) circle (1.1703378296922424cm);
\draw [line width=0.4pt,color=qqwuqq] (2.842100279247295,0.28475678550565386) circle (1.2888467480222292cm);
\draw [line width=0.4pt,color=qqqqff] (0.3613315387059999,7.896469754577397) circle (2.822249847615249cm);
\draw [line width=0.4pt,color=qqqqff] (-6.258913348857249,-0.4997908028662068) circle (4.353266189564844cm);
\draw [line width=0.4pt,color=qqqqff] (5.1196565563742675,-1.6918358290926054) circle (1.7268092269797035cm);
\draw [line width=0.4pt,color=qqwuqq] (-0.10665297235761506,-3.7422306366426854) circle (3.7023227979858664cm);
\draw [line width=0.4pt,color=qqwuqq] (4.779697020900073,3.626144718424509) circle (2.5736853814901486cm);
\draw [line width=0.4pt,color=qqwuqq] (-4.743874904406102,5.406073047853568) circle (4.5964633033874955cm);
\draw [line width=0.4pt,color=qqqqff] (0.13025582500189836,-4.530148455285383) circle (2.879559000045656cm);
\draw [line width=0.4pt,color=qqqqff] (-5.070497076365206,5.810135097264576) circle (4.076897924450922cm);
\draw [line width=0.4pt,color=qqqqff] (5.72783759647236,3.9077126136554474) circle (1.584619645216099cm);
\draw [line width=0.4pt,dash pattern=on 1pt off 1pt] (0.13025582500189836,-4.530148455285383)-- (0.3613315387059999,7.896469754577397);
\draw [line width=0.4pt,dash pattern=on 1pt off 1pt] (-6.258913348857249,-0.4997908028662068)-- (5.72783759647236,3.9077126136554474);
\draw [line width=0.4pt,dash pattern=on 1pt off 1pt] (5.1196565563742675,-1.6918358290926054)-- (-5.070497076365206,5.810135097264576);
\begin{scriptsize}
\draw[color=ccqqqq] (-0.8867822412208483,3.9380304731061604) node {$\Omega_1$};
\draw[color=ccqqqq] (-4.83918975260611,11.407147030054382) node {$\Omega_2$};
\draw[color=qqwuqq] (1.1672248119399964,4.529335533864562) node {$\omega_1$};
\draw[color=qqwuqq] (-0.9179035602081339,1.7284168250089784) node {$\omega_3$};
\draw[color=qqwuqq] (2.941139994215271,1.0126264883014404) node {$\omega_5$};
\draw [fill=qqqqff] (0.3613315387059999,7.896469754577397) circle (2.5pt);
\draw[color=qqqqff] (-1.9137857678012704,10.629114055372275) node {$\Gamma_1$};
\draw [fill=qqqqff] (-6.258913348857249,-0.4997908028662068) circle (2.5pt);
\draw[color=qqqqff] (-9.020659686775226,1.9462660579199682) node {$\Gamma_3$};
\draw [fill=qqqqff] (5.1196565563742675,-1.6918358290926054) circle (2.5pt);
\draw[color=qqqqff] (5.524209470159969,-0.45007550410091957) node {$\Gamma_5$};
\draw[color=qqwuqq] (-3.252002484254548,-0.41895418511363525) node {$\omega_4$};
\draw[color=qqwuqq] (3.9370222018084076,5.4318537844958055) node {$\omega_6$};
\draw[color=qqwuqq] (-5.0881603045043935,9.508746571830041) node {$\omega_2$};
\draw[color=qqqqff] (-1.0112675171699903,-2.72193179017267) node {$\Gamma_4$};
\draw [fill=qqqqff] (0.13025582500189836,-4.530148455285383) circle (2.5pt);
\draw[color=qqqqff] (-8.138049565258376,7.454739518669282) node {$\Gamma_2$};
\draw [fill=qqqqff] (-5.070497076365206,5.810135097264576) circle (2.5pt);
\draw[color=qqqqff] (5.461966832185398,4.684942128800984) node {$\Gamma_6$};
\draw [fill=qqqqff] (5.72783759647236,3.9077126136554474) circle (2.5pt);
\draw[color=qqqqff] (0.461966832185398,8.684942128800984) node {$S_1$};
\draw[color=qqqqff] (-5.461966832185398,6.684942128800984) node {$S_2$};
\draw[color=qqqqff] (-5.861966832185398,-1.684942128800984) node {$S_3$};
\draw[color=qqqqff] (0.461966832185398,-5.684942128800984) node {$S_4$};
\draw[color=qqqqff] (5.461966832185398,-2.684942128800984) node {$S_5$};
\draw[color=qqqqff] (5.461966832185398,3.184942128800984) node {$S_6$};
\end{scriptsize}
\end{tikzpicture}

        \smallskip
        \textbf{(b)}
    \end{minipage}

    \caption{The general configuration in the two tangency cases: (a) external tangency and (b) internal tangency.}
    \label{final}
\end{figure}

The classical theorem suggests asking whether the lines
\[
S_1S_4,\qquad S_2S_5,\qquad S_3S_6
\]
are concurrent. Our main theorem shows that they are.

\section{The Generalized Seven Circles Theorem}

We now make the tangency convention precise. An oriented circle is a circle equipped with one of its two possible orientations. Two oriented circles are said to be tangent if they are tangent in the usual Euclidean sense and their orientations agree at the point of tangency. Throughout this section, all circles are oriented, and all tangencies are understood in this sense.

We can now state the main result.

\begin{thm}\label{thm:main}
Let $\omega_1,\ldots,\omega_6$ form a closed chain of six consecutively tangent circles, indexed in cyclic order. Let $\Omega_1$ and $\Omega_2$ be circles with opposite orientations such that $\Omega_1$ is tangent to $\omega_1$, $\omega_3$, and $\omega_5$, while $\Omega_2$ is tangent to $\omega_2$, $\omega_4$, and $\omega_6$. For each $1\leq i\leq 6$, let $\Gamma_i$ be a circle centered at $S_i$ and tangent to $\Omega_1$, $\Omega_2$, and $\omega_i$. Then the lines
\[
S_1S_4,\qquad S_2S_5,\qquad S_3S_6
\]
are concurrent; see Figure~\ref{kuk}(a).
\end{thm}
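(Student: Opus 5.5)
The plan is to work in Laguerre geometry, via the Blaschke (cyclographic) model. An oriented circle with center $c$ and signed radius $r$ is the point $(c,r)$ of $\mathbb{R}^{2,1}$, with quadratic form $\langle (x,y,r),(x,y,r)\rangle=x^2+y^2-r^2$. Two oriented circles are tangent exactly when their difference vector is lightlike. So the hypotheses become incidence statements about light cones:
\begin{itemize}
\item $\hat\Omega_1$ lies on the light cones of $\hat\omega_1,\hat\omega_3,\hat\omega_5$;
\item $\hat\Omega_2$ lies on the light cones of $\hat\omega_2,\hat\omega_4,\hat\omega_6$;
\item consecutive $\hat\omega_i,\hat\omega_{i+1}$ are lightlike separated;
\item each $\hat\Gamma_i$ lies on the cones of $\hat\Omega_1$, $\hat\Omega_2$ and $\hat\omega_i$.
\end{itemize}
The centers $S_i$ are the images of the $\hat\Gamma_i$ under the vertical projection $\pi(x,y,r)=(x,y)$.

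\textbf{Step 1: the $\hat\Gamma_i$ lie on a plane conic.} Subtracting the two cone equations $\|X-\hat\Omega_1\|^2=0$ and $\|X-\hat\Omega_2\|^2=0$ gives a linear equation. Hence all six $\hat\Gamma_i$ lie in a plane $\Pi\subset\mathbb{R}^{2,1}$ and on the conic $Q=\Pi\cap C(\hat\Omega_1)$.

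The plane $\Pi$ is non-vertical. Its normal is proportional to $\hat\Omega_2-\hat\Omega_1$, whose $r$-component is $r_2-r_1$. Because the orientations are opposite this is $\pm(|r_1|+|r_2|)\neq 0$, unless both circles are point circles, which I treat as a degenerate limit. This is exactly where the hypothesis of opposite orientations is used.

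It follows that $\pi|_\Pi:\Pi\to\mathbb{R}^2$ is an affine bijection. Affine bijections carry lines to lines and preserve concurrency. Therefore it suffices to prove that the three lines $\hat\Gamma_1\hat\Gamma_4$, $\hat\Gamma_2\hat\Gamma_5$, $\hat\Gamma_3\hat\Gamma_6$ are concurrent inside $\Pi$.

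\textbf{Step 2: normalise with a Laguerre transformation.} Laguerre transformations are the affine maps of $\mathbb{R}^{2,1}$ whose linear part lies in $O(2,1)$. They preserve lightlike separation, and they map lines and planes to lines and planes. The statement to be proved in $\Pi$ is therefore Laguerre invariant.

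I would apply a Lorentz boost together with a translation. The aim is to move $\hat\Omega_1$ and $\hat\Omega_2$ to a symmetric position: $\hat\Omega_{1,2}=(0,0,\pm\rho)$ when $\hat\Omega_2-\hat\Omega_1$ is timelike, or $(\pm d,0,0)$ when it is spacelike. In either case $\Pi$ becomes a coordinate plane, either $r=0$ or $x=0$. The conic $Q$ then becomes either a genuine Euclidean circle in the plane $r=0$, or a hyperbola in a vertical plane.

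In the timelike case this is almost the classical situation. The $\hat\Gamma_i$ are point circles lying on a common circle $\Omega$ centered at the origin. Each $\hat\omega_i$ is tangent (as an oriented circle) to the point circle $\hat\Gamma_i$, which means it passes through that point. It is also tangent to the corresponding circle $\Omega_1$ or $\Omega_2$, which are now concentric with $\Omega$ with radius $\pm\rho$.

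\textbf{Step 3: the concurrence in $\Pi$, the main obstacle.} Here I would imitate Brown's Brianchon argument. Each $\hat\omega_i$ determines the point $\hat\Gamma_i$ of $Q$. The chain condition, that $\hat\omega_i-\hat\omega_{i+1}$ is lightlike, should translate into a tangency relation among six lines in $\Pi$: namely the traces on $\Pi$ of the null planes that are tangent to the cones of $\hat\omega_i$ through $\hat\Gamma_i$. Concretely, I would try to show that these traces are tangent to a second conic $Q'$ in $\Pi$, namely the trace of the cone spanned by the alternating triple. The hexagon they bound should then have its main diagonals through $\hat\Gamma_i,\hat\Gamma_{i+3}$, so Brianchon's theorem gives the concurrence.

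If this identification turns out to be messy, I have two fallbacks.
\begin{itemize}
\item In the timelike normalisation, run the Drach–Schwartz hyperbolic argument directly.
\item Alternatively, use a radius shift $r\mapsto r+t$ to turn the problem into the classical Seven Circles Theorem for the point circles on $\Omega$. That theorem is stated in the Introduction.
\end{itemize}
The spacelike case would then follow by analytic continuation, since concurrency is a polynomial identity in the parameters of the configuration.
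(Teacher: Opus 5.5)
\textbf{Verdict: genuine gap.} Your Steps 1 and 2 are correct, but the Brianchon plan in Step 3 cannot work, and the spacelike case is left unproved.

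\textbf{Why Step 3 fails, and what it hides.} Since $\hat\Omega_1$ and $\hat\omega_i$ ($i$ odd) are lightlike separated, their light cones meet only in the null line joining them. So $\hat\Gamma_i$ lies on that line; for $i$ even it lies on the null line $\hat\Omega_2\hat\omega_i$. Geometrically, $\Gamma_i$ touches $\omega_i$ and $\Omega_1$ at their common point of contact. Hence the null plane tangent to the cone of $\hat\omega_i$ at $\hat\Gamma_i$ is also tangent to the cone of $\hat\Omega_1$ (resp.\ $\hat\Omega_2$) along that line. Its trace on $\Pi$ is just the tangent line $t_i$ to $Q$ at $\hat\Gamma_i$. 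So $Q'=Q$, and the chain condition never enters. Brianchon then only returns the always-true concurrency of the diagonals of the tangential hexagon. Those diagonals join $t_i\cap t_{i+1}$ to $t_{i+3}\cap t_{i+4}$ and are not the lines $\hat\Gamma_i\hat\Gamma_{i+3}$.

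The same fact shows what you missed. After the timelike normalisation, $\Omega_1$ and $\Omega_2$ are one circle $\Omega$ with its two orientations. Each $\omega_i$ touches $\Omega$ exactly at the point $\hat\Gamma_i$, and consecutive $\omega_i$ are tangent. This is literally the classical configuration; no radius shift is needed, since $\Pi$ is already $r=0$. So the classical theorem finishes the timelike case at once. That is a genuinely different and much shorter route than the paper's. The paper shrinks $\Omega_1$ to a point $O_1$ by a dilatation, inverts at $O_1$, and uses Lemma~\ref{lemma1} to trade the reflections of $O_1$ for the contact points $X_i$ on $\Omega_2$. It then concludes with Monge and Steinbart, without ever invoking the classical theorem.

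\textbf{The spacelike case.} The remaining gap is the case $|c_1-c_2|>|r_1|+|r_2|$, where $\Omega_1$ and $\Omega_2$ lie outside each other. No real Laguerre map brings it to the classical picture, since the sign of $\|\hat\Omega_2-\hat\Omega_1\|^2$ is invariant. Your appeal to analytic continuation is not justified either. The configurations are not freely parametrised, and concurrency is not an identity on the configuration variety.

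To see this, work in the classical normalisation, with complex coordinates $z_i$ for the points $\hat\Gamma_i$ on $\Omega$. The tangencies read $|\hat\Gamma_i\hat\Gamma_{i+1}|^2=4\rho^2u_iu_{i+1}$, with $u_i$ determined by $\omega_i$. Eliminating the $u_i$ around the cycle yields only $P_1^2=P_2^2$, where $P_1=(z_1-z_2)(z_3-z_4)(z_5-z_6)$ and $P_2=(z_2-z_3)(z_4-z_5)(z_6-z_1)$. Concurrency of the three chords, however, is $P_1+P_2=0$. Both branches contain real chains. For example, six circles inside $\Omega$ whose contact points wind twice around $\Omega$ give $P_1=P_2\neq 0$, and their three lines are not concurrent. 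So the classical theorem you would cite, and the statement itself, tacitly need a configuration hypothesis.

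You would therefore have to show that the spacelike configurations, transported by a complex Lorentz map, lie on the correct branch, or else handle them directly. The paper needs no case split here, because its dilatation and inversion work the same way whether $O_1$ lies inside or outside $\Omega_2$.
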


The condition that $\Omega_1$ and $\Omega_2$ have opposite orientations corresponds, in the unoriented setting, to choosing the same tangency pattern for the two alternating triples: both circles are externally tangent to the members of their respective triples, or both are internally tangent. We added this assumption to avoid the situation in which our construction is undefined, that is, when the circles $\Gamma_i$ cannot be constructed. Accordingly, one can observe that in Figure~\ref{kuk}(a), if we took $\Omega_1$ to be a circle externally tangent to $\omega_1,\omega_3,\omega_5$, then the circles $\Omega_1$ and $\Omega_2$ would be directed the same way, with one inside the other, and there would not exist any (directed) circles $\Gamma_i$ tangent to both.

        \begin{figure}[ht]
    \centering

    \begin{minipage}[t]{0.47\textwidth}
        \vspace{0pt}
        \centering
        \pgfplotsset{compat=1.15}
\begin{center}
\definecolor{wewdxt}{rgb}{0.43137254901960786,0.42745098039215684,0.45098039215686275}
\definecolor{qqqqff}{rgb}{0,0,1}
\definecolor{qqwuqq}{rgb}{0,0.39215686274509803,0}
\definecolor{ccqqqq}{rgb}{0.8,0,0}
\begin{tikzpicture}[scale=0.32][line cap=round,line join=round,>=triangle 45,x=1cm,y=1cm]
\clip(-11.68053142174941,-8.15578439933071) rectangle (8.094579155645018,11.426633261352602);
\draw [line width=0.4pt,color=ccqqqq] (1.2181800805352183,1.6940869099495046) circle (3.4390393467096096cm);
\draw [line width=0.4pt,color=ccqqqq] (-1.74,1.69) circle (9.374795997780433cm);
\draw [line width=0.4pt,color=qqwuqq] (0.9237346814822205,3.8254590283722223) circle (1.2874247901002658cm);
\draw [line width=0.4pt,color=qqwuqq] (-0.9587485739145309,1.0553473646319262) circle (1.1703378296922424cm);
\draw [line width=0.4pt,color=qqwuqq] (2.842100279247295,0.28475678550565386) circle (1.2888467480222292cm);
\draw [line width=0.4pt,color=qqqqff] (0.3613315387059999,7.896469754577397) circle (2.822249847615249cm);
\draw [line width=0.4pt,color=qqqqff] (-6.258913348857249,-0.4997908028662068) circle (4.353266189564844cm);
\draw [line width=0.4pt,color=qqqqff] (5.1196565563742675,-1.6918358290926054) circle (1.7268092269797035cm);
\draw [line width=0.4pt,color=qqwuqq] (-0.10665297235761506,-3.7422306366426854) circle (3.7023227979858664cm);
\draw [line width=0.4pt,color=qqwuqq] (4.779697020900073,3.626144718424509) circle (2.5736853814901486cm);
\draw [line width=0.4pt,color=qqwuqq] (-4.743874904406102,5.406073047853568) circle (4.5964633033874955cm);
\draw [line width=0.4pt,color=qqqqff] (0.13025582500189836,-4.530148455285383) circle (2.879559000045656cm);
\draw [line width=0.4pt,color=qqqqff] (-5.070497076365206,5.810135097264576) circle (4.076897924450922cm);
\draw [line width=0.4pt,color=qqqqff] (5.72783759647236,3.9077126136554474) circle (1.584619645216099cm);
\draw [line width=0.4pt,dash pattern=on 1pt off 1pt] (0.13025582500189836,-4.530148455285383)-- (0.3613315387059999,7.896469754577397);
\draw [line width=0.4pt,dash pattern=on 1pt off 1pt] (-6.258913348857249,-0.4997908028662068)-- (5.72783759647236,3.9077126136554474);
\draw [line width=0.4pt,dash pattern=on 1pt off 1pt] (5.1196565563742675,-1.6918358290926054)-- (-5.070497076365206,5.810135097264576);
\draw [line width=0.4pt,color=qqqqff] (-9.754636556322438,2.337127918972759)-- (-9.794640841796108,2.0398071206021333);
\draw [line width=0.4pt,color=qqqqff] (-9.794640841796108,2.0398071206021333)-- (-9.51715133459684,2.153822792307003);
\draw [line width=0.4pt,color=qqwuqq] (1.4290701144376754,2.810294294452862)-- (1.2398944297756582,2.5774583858548623);
\draw [line width=0.4pt,color=qqwuqq] (1.2398944297756582,2.5774583858548623)-- (1.5361240838657662,2.5300453914582413);
\draw [line width=0.4pt,color=ccqqqq] (3.2685650699928805,4.455049246386487)-- (2.9693324564155255,4.4764931959217495);
\draw [line width=0.4pt,color=ccqqqq] (3.2685650699928805,4.455049246386487)-- (3.137519768259212,4.72491426615292);
\draw [line width=0.4pt,color=qqqqff] (5.7204675752389385,2.3231101074123788)-- (5.447050407029024,2.199647351376086);
\draw [line width=0.4pt,color=qqqqff] (5.7204675752389385,2.3231101074123788)-- (5.476837107985311,2.4981649428948214);
\draw [line width=0.4pt,color=qqwuqq] (5.903501714510298,1.3107787746662178)-- (5.606034714303026,1.349681008376887);
\draw [line width=0.4pt,color=qqwuqq] (5.903501714510298,1.3107787746662178)-- (5.721077891749263,1.0726159125545038);
\draw [line width=0.4pt,color=qqwuqq] (2.3812609969911716,1.488398383248577)-- (2.2206623856607357,1.235005238039925);
\draw [line width=0.4pt,color=qqwuqq] (2.3812609969911716,1.488398383248577)-- (2.0815167904304217,1.5007842878689117);
\draw [line width=0.4pt,color=qqwuqq] (-1.082681977735814,2.064398025300544)-- (-0.8249978319983333,2.2180172982956528);
\draw [line width=0.4pt,color=qqwuqq] (-0.8249978319983333,2.2180172982956528)-- (-1.0868780977917347,2.364368678159248);
\draw [line width=0.4pt,color=qqqqff] (-6.412513128600375,1.8033422928780294)-- (-6.120168661073041,1.870682565420947);
\draw [line width=0.4pt,color=qqqqff] (-6.120168661073041,1.870682565420947)-- (-6.3246592815566425,2.090190164683994);
\draw [line width=0.4pt,color=qqwuqq] (-4.2822927066202165,0.8328447817296212)-- (-4.531315918023257,0.6655503102653245);
\draw [line width=0.4pt,color=qqwuqq] (-4.2822927066202165,0.8328447817296212)-- (-4.551685574522509,0.9648579732044882);
\draw [line width=0.4pt,color=qqqqff] (3.1494426020598825,7.458827319742776)-- (3.242309870943839,7.173563108229977);
\draw [line width=0.4pt,color=qqqqff] (3.1494426020598825,7.458827319742776)-- (2.9488301825412395,7.23576979995279);
\draw [line width=0.4pt,color=ccqqqq] (4.390829468018523,8.782230258111742)-- (4.484737136968283,8.497306847854858);
\draw [line width=0.4pt,color=ccqqqq] (4.390829468018523,8.782230258111742)-- (4.68453421390876,8.721094979903972);
\draw [line width=0.4pt,color=qqqqff] (4.25854182402938,-3.3557149812988496)-- (4.556900055655637,-3.3243722688945363);
\draw [line width=0.4pt,color=qqqqff] (4.380577354676864,-3.0816578170801523)-- (4.556900055655637,-3.3243722688945363);
\draw [line width=0.4pt,color=qqwuqq] (-3.4774871945158226,-4.85889691882059)-- (-3.529062373410583,-5.154430336426735);
\draw [line width=0.4pt,color=qqwuqq] (-3.529062373410583,-5.154430336426735)-- (-3.7592142312773595,-4.961998212496073);
\draw [line width=0.4pt,color=qqqqff] (-2.466037411348625,-5.367097874256043)-- (-2.517232205389245,-5.662697422738694);
\draw [line width=0.4pt,color=qqqqff] (-2.517232205389245,-5.662697422738694)-- (-2.7476315267021207,-5.470561656316679);
\begin{scriptsize}
\draw[color=ccqqqq] (-1.7933816276175201,4.492259119067156) node {$\Omega_1$};
\draw[color=ccqqqq] (-5.177145482776821,11.00511750604884) node {$\Omega_2$};
\draw[color=qqwuqq] (0.9649412373639579,4.598078025431669) node {$\omega_1$};
\draw[color=qqwuqq] (-1.0583343880941811,1.8040307331324512) node {$\omega_3$};
\draw[color=qqwuqq] (2.7714373315230105,1.0814322954688602) node {$\omega_5$};
\draw [fill=qqqqff] (0.3613315387059999,7.896469754577397) circle (2.5pt);
\draw[color=qqqqff] (-1.7086729819914401,10.571558443450686) node {$\Gamma_1$};
\draw [fill=qqqqff] (-6.258913348857249,-0.4997908028662068) circle (2.5pt);
\draw[color=qqqqff] (-9.52735027533795,3.1375897957306057) node {$\Gamma_3$};
\draw [fill=qqqqff] (5.1196565563742675,-1.6918358290926054) circle (2.5pt);
\draw[color=qqqqff] (5.228272019579322,-0.38785119444710786) node {$\Gamma_5$};
\draw[color=qqwuqq] (-2.961176940608383,-0.46011103821346705) node {$\omega_4$};
\draw[color=qqwuqq] (3.8312483734296547,5.513369379805551) node {$\omega_6$};
\draw[color=qqwuqq] (-5.538444701608632,9.463574172366513) node {$\omega_2$};
\draw[color=qqqqff] (-1.0824210026829684,-2.579733122026667) node {$\Gamma_4$};
\draw [fill=qqqqff] (0.13025582500189836,-4.530148455285383) circle (2.5pt);
\draw[color=qqqqff] (-7.923019545898581,7.8497709949178285) node {$\Gamma_2$};
\draw [fill=qqqqff] (-5.070497076365206,5.810135097264576) circle (2.5pt);
\draw[color=qqqqff] (5.324618477934472,4.814857556730747) node {$\Gamma_6$};
\draw [fill=qqqqff] (5.72783759647236,3.9077126136554474) circle (2.5pt);


\draw[color=qqqqff] (0.324618477934472,8.414857556730747) node {$S_1$};
\draw[color=qqqqff] (-5.924618477934472,-0.014857556730747) node {$S_3$};
\draw[color=qqqqff] (5.324618477934472,-2.414857556730747) node {$S_5$};
\draw[color=qqqqff] (1.024618477934472,-4.414857556730747) node {$S_4$};
\draw[color=qqqqff] (6.324618477934472,4.414857556730747) node {$S_6$};
\draw[color=qqqqff] (-5.024618477934472,6.614857556730747) node {$S_2$};
\end{scriptsize}
\end{tikzpicture}
\end{center}

        \smallskip
        \textbf{(a)}
    \end{minipage}
    \hfill
    \begin{minipage}[t]{0.47\textwidth}
        \vspace{0pt}
        \centering
        \pgfplotsset{compat=1.15}
\begin{center}
\definecolor{wewdxt}{rgb}{0.43137254901960786,0.42745098039215684,0.45098039215686275}
\definecolor{qqwuqq}{rgb}{0,0.39215686274509803,0}
\definecolor{qqqqff}{rgb}{0,0,1}
\definecolor{ccqqqq}{rgb}{0.8,0,0}
\begin{tikzpicture}[scale=0.39][line cap=round,line join=round,>=triangle 45,x=1cm,y=1cm]
\clip(-12.410804906635116,-6.3562348648737155) rectangle (3.6824536117752,9.951886918134777);
\draw [line width=0.4pt,color=ccqqqq] (-4.395132699681245,1.7102540515308322) circle (7.757654694236227cm);
\draw [line width=0.4pt,color=qqqqff] (-7.638728785219712,-1.677910763642163) circle (3.067177492405188cm);
\draw [line width=0.4pt,color=qqqqff] (-8.681058006454752,3.4593749199708315) circle (3.1285533184124197cm);
\draw [line width=0.4pt,color=qqqqff] (-3.9163201333533455,4.89234531863769) circle (4.539741347691414cm);
\draw [line width=0.4pt,color=qqwuqq] (-7.120411118700901,-0.33496495388332326) circle (1.6276787423625458cm);
\draw [line width=0.4pt,color=qqwuqq] (-7.833825298983135,2.5611856880523614) circle (1.8938277792969553cm);
\draw [line width=0.4pt,color=qqwuqq] (-5.142775738473037,3.154889552484662) circle (2.4130194827857294cm);
\draw [line width=0.4pt,color=qqwuqq] (-2.9602986201480697,0.4783810489652778) circle (5.8665545038074685cm);
\draw [line width=0.4pt,color=qqwuqq] (-8.51876936299704,1.023446728840245) circle (3.5772140919140716cm);
\draw [line width=0.4pt,color=qqqqff] (-2.0726928060566387,-0.28367057421278247) circle (4.696696923758842cm);
\draw [line width=0.4pt,color=qqqqff] (-9.302158822667268,0.8929702437981555) circle (2.7830332821035153cm);
\draw [line width=0.4pt,color=qqwuqq] (-6.440696596215478,4.510533628340479) circle (4.289817131272597cm);
\draw [line width=0.4pt,color=qqqqff] (-6.752834887386835,4.937836086791347) circle (3.7606501407927198cm);
\draw [line width=0.4pt,dash pattern=on 1pt off 1pt] (-7.638728785219712,-1.677910763642163)-- (-6.752834887386835,4.937836086791347);
\draw [line width=0.4pt,dash pattern=on 1pt off 1pt] (-9.302158822667268,0.8929702437981555)-- (-3.9163201333533455,4.89234531863769);
\draw [line width=0.4pt,dash pattern=on 1pt off 1pt] (-2.0726928060566387,-0.28367057421278247)-- (-8.681058006454752,3.4593749199708315);
\draw [line width=0.4pt,color=qqqqff] (-3.9820360658394844,2.6248458464710516)-- (-4.042347332461691,2.330970786082425);
\draw [line width=0.4pt,color=qqqqff] (-4.042347332461691,2.330970786082425)-- (-3.7576884313153514,2.425677227247491);
\draw [line width=0.4pt,color=qqwuqq] (-5.0243319383553855,5.712857033250669)-- (-4.778913577911385,5.540317680183108);
\draw [line width=0.4pt,color=qqwuqq] (-4.778913577911385,5.540317680183108)-- (-5.051046221042426,5.414048822017258);
\draw [line width=0.4pt,color=qqqqff] (-7.38483682371861,6.139342760030762)-- (-7.085033433726901,6.150202195995483);
\draw [line width=0.4pt,color=qqqqff] (-7.085033433726901,6.150202195995483)-- (-7.244339676138974,6.404409829886636);
\draw [line width=0.4pt,color=ccqqqq] (-6.890510333933957,-5.475808376371037)-- (-7.18593152343484,-5.528022558294253);
\draw [line width=0.4pt,color=ccqqqq] (-7.18593152343484,-5.528022558294253)-- (-6.993002120701072,-5.757757722256627);
\draw [line width=0.4pt,color=qqqqff] (-7.049637762730039,-4.838869396654467)-- (-7.32895399147851,-4.729405044659183);
\draw [line width=0.4pt,color=qqqqff] (-7.32895399147851,-4.729405044659183)-- (-7.094496967467556,-4.542242270871384);
\draw [line width=0.4pt,color=qqqqff] (-4.4614566648465965,-4.499488079504036)-- (-4.607653402916432,-4.237521452282669);
\draw [line width=0.4pt,color=qqwuqq] (-9.134522351153,-2.6510038384529175)-- (-9.360239969804082,-2.453389171409924);
\draw [line width=0.4pt,color=qqwuqq] (-9.360239969804082,-2.453389171409924)-- (-9.076241838658905,-2.3567193130978557);
\draw [line width=0.4pt,color=qqqqff] (-4.607653402916432,-4.237521452282669)-- (-4.307685279764082,-4.241894676774455);
\draw [line width=0.4pt,color=qqwuqq] (-3.8538586447044856,-5.319723143670314)-- (-3.5771054428619,-5.203931670452835);
\draw [line width=0.4pt,color=qqwuqq] (-3.615203686435231,-5.501502710435935)-- (-3.8538586447044856,-5.319723143670314);
\draw [line width=0.4pt,color=qqqqff] (-9.683303769922855,-2.0131881980588027)-- (-9.914395144069273,-1.8218853575280005);
\draw [line width=0.4pt,color=qqqqff] (-9.914395144069273,-1.8218853575280005)-- (-9.633176337280267,-1.7174057771871498);
\draw [line width=0.4pt,color=qqwuqq] (-9.264517472932152,4.016563542668063)-- (-8.970462680605863,4.075992316086008);
\draw [line width=0.4pt,color=qqwuqq] (-8.970462680605863,4.075992316086008)-- (-9.066023249273318,3.791619009117913);
\draw [line width=0.4pt,color=qqwuqq] (-3.575830684522346,7.900709168540255)-- (-3.4812208189741143,7.616018154735913);
\draw [line width=0.4pt,color=qqwuqq] (-3.4812208189741143,7.616018154735913)-- (-3.7750754019319355,7.6764291146246855);
\draw [line width=0.4pt,color=qqwuqq] (-5.681058553744808,-0.013352921709970489)-- (-5.495019801053933,-0.24870283672422933);
\draw [line width=0.4pt,color=qqwuqq] (-5.495019801053933,-0.24870283672422933)-- (-5.384220172218514,0.030086406701569535);
\draw [line width=0.4pt,color=qqqqff] (-1.090276098174091,1.5347262869480118)-- (-1.199481007170077,1.2553085211363526);
\draw [line width=0.4pt,color=qqqqff] (-1.199481007170077,1.2553085211363526)-- (-0.9028956692104962,1.3004431786336907);
\begin{scriptsize}
\draw[color=ccqqqq] (-7.726785738392861,9.339526614029715) node {$\Omega_2$};
\draw [fill=ccqqqq] (-6.534335828563529,1.1835388632436916) circle (2.5pt);
\draw[color=ccqqqq] (-6.222742886204981,1.7333670472510514) node {$O_1$};
\draw[color=qqqqff] (-9.767986752076414,-0.26486131351283504) node {$\Gamma_1$};
\draw[color=qqqqff] (-10.011767975433644,5.809146012756571) node {$\Gamma_3$};
\draw[color=qqqqff] (-6.523551456642557,7.921429067681151) node {$\Gamma_5$};
\draw[color=qqwuqq] (-8.070566961750092,-0.39377927227179543) node {$\omega_1$};
\draw[color=qqwuqq] (-9.0374516524423,2.979573981921002) node {$\omega_3$};
\draw[color=qqwuqq] (-6.136797580365673,4.870370710385754) node {$\omega_5$};
\draw[color=qqwuqq] (-4.095596566682122,5.815769074618131) node {$\omega_6$};
\draw[color=qqwuqq] (-10.025822669594335,4.827398057466101) node {$\omega_2$};
\draw[color=qqqqff] (-4.203028198981256,3.4952458169568437) node {$\Gamma_6$};
\draw[color=qqqqff] (-10.75635776922845,2.7217380644030813) node {$\Gamma_2$};
\draw[color=qqwuqq] (-8.951506346602994,7.276839273886349) node {$\omega_4$};
\draw[color=qqqqff] (-8.349889205727841,7.899942741221325) node {$\Gamma_4$};
\draw [fill=qqqqff] (-2.0726928060566387,-0.28367057421278247) circle (2.5pt);
\draw[color=qqqqff] (-1.8395322884003025,0.25081052152300665) node {$S_6$};
\draw [fill=qqqqff] (-6.752834887386835,4.937836086791347) circle (2.5pt);
\draw[color=qqqqff] (-7.146654923977536,5.3860425454215966) node {$S_4$};
\draw [fill=qqqqff] (-9.302158822667268,0.8929702437981555) circle (2.5pt);
\draw[color=qqqqff] (-9.85393205791572,1.368099497433997) node {$S_2$};
\draw [fill=qqqqff] (-7.638728785219712,-1.677910763642163) circle (2.5pt);
\draw[color=qqqqff] (-6.931791659379267,-1.3606639629639985) node {$S_1$};
\draw [fill=qqqqff] (-8.681058006454752,3.4593749199708315) circle (2.5pt);
\draw[color=qqqqff] (-8.414348185107322,4.096862957831992) node {$S_3$};
\draw [fill=qqqqff] (-3.9163201333533455,4.89234531863769) circle (2.5pt);
\draw[color=qqqqff] (-3.687356363945412,5.42901519834125) node {$S_5$};
\end{scriptsize}
\end{tikzpicture}
\end{center}

        \smallskip
        \textbf{(b)}
    \end{minipage}

    \caption{The configuration of Theorem~\ref{thm:main}: (a) the original configuration in case of internal tangency and (b) the configuration after a dilatation sending
$\Omega_1$ to a point circle.}

    \label{kuk}
\end{figure}

Before proceeding with the proof, we prove a following lemma. For the standard properties of poles and polars used below, see~\cite{chen}.

\begin{lem}\label{lemma1}
Let $A,B,C,D,E,F$ be six distinct points on a circle $\omega$, and suppose that the lines $AD$, $BE$, and $CF$ are concurrent. Let $P$ be a point in the plane, and let $A',B',C',D',E',F'$ denote the orthogonal projections of $P$ onto the tangents to $\omega$ at $A,B,C,D,E,F$, respectively. Then the circles
\[
(PA'D'),\qquad (PB'E'),\qquad (PC'F')
\]
either have a second common point distinct from $P$ or are tangent to one another at $P$.
\end{lem}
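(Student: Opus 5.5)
The plan is to apply an inversion centered at $P$. This turns the three circles through $P$ into three lines, so the claim becomes a concurrence of lines, which projective duality (poles and polars) then settles.

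\emph{Step 1: the inverted points are poles of the tangents.} Fix a circle $\sigma$ centered at $P$ with radius $r$, and invert in $\sigma$. Let $\ell_X$ denote the tangent to $\omega$ at a point $X$, and let $X'$ be the foot of the perpendicular from $P$ to $\ell_X$. The inverse $X^*$ of $X'$ lies on the ray $PX'$ at distance $r^2/PX'$ from $P$. By the standard description of the polar of a point with respect to a circle, this says exactly that $X^*$ is the pole of $\ell_X$ with respect to $\sigma$.

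\emph{Step 2: the circles become lines, so it suffices to show a concurrence.} Under the inversion, the circle $(PA'D')$ goes to the line $A^*D^*$, and similarly $(PB'E')$ goes to $B^*E^*$ and $(PC'F')$ goes to $C^*F^*$. The statement therefore reduces to showing that $A^*D^*$, $B^*E^*$, $C^*F^*$ are either concurrent or parallel.
\begin{itemize}
\item If they concur at a finite point, its inverse is a second common point of the three circles, distinct from $P$.
\item If they are parallel, the three circles have a common tangent direction at $P$ and are mutually tangent there.
\end{itemize}

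\emph{Step 3: the concurrence via two dualities.}
\begin{itemize}
\item Since $A^*$ and $D^*$ are the $\sigma$-poles of $\ell_A$ and $\ell_D$, the line $A^*D^*$ is the $\sigma$-polar of the point $T_{AD}=\ell_A\cap\ell_D$. The same holds for $T_{BE}=\ell_B\cap\ell_E$ and $T_{CF}=\ell_C\cap\ell_F$.
\item With respect to $\omega$, the point $T_{AD}$ is the pole of the chord $AD$, and similarly for $T_{BE}$ and $T_{CF}$.
\item The chords $AD$, $BE$, $CF$ pass through a common point $Q$. By La Hire's theorem their $\omega$-poles $T_{AD},T_{BE},T_{CF}$ all lie on the $\omega$-polar $m$ of $Q$.
\item Taking $\sigma$-polars of three collinear points, the lines $A^*D^*$, $B^*E^*$, $C^*F^*$ all pass through the $\sigma$-pole of $m$.
\end{itemize}
That $\sigma$-pole is a finite point exactly when $P\notin m$. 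When $P\in m$ it is a point at infinity, the three lines are parallel, and we are in the tangency case.

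\emph{Main obstacle: degenerate positions.} I expect the real work to lie in handling these, and I would treat them in the projective plane. If a chord such as $AD$ is a diameter, then $\ell_A\parallel\ell_D$ and $T_{AD}$ is at infinity. Its $\sigma$-polar is then the line through $P$ perpendicular to that direction, which is still the line $A^*D^*$, so the argument goes through verbatim. The delicate case is when $P$ lies on some tangent $\ell_X$, since then $X'=P$ and the corresponding circle through $P$ is not well defined. I would exclude this case, or interpret it as a limit, by noting that the statement implicitly requires $P\neq A',\dots,F'$ and $P,A',D'$ non-collinear. I would also check the following: if $A^*$ and $D^*$ coincide, then $\ell_A=\ell_D$, which is impossible since $A\neq D$. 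Hence each line $A^*D^*$ is well defined.
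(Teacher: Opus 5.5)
Your proof is correct. It shares the paper's central step: La Hire's theorem places $\ell_A\cap\ell_D$, $\ell_B\cap\ell_E$, $\ell_C\cap\ell_F$ on the polar $m$ of $Q$. The difference is in how you finish.

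The paper stays in the original picture. Since $A'$ and $D'$ are the feet of perpendiculars from $P$ onto two lines through $X=\ell_A\cap\ell_D$, the circle $(PA'D')$ is simply the circle on diameter $PX$, and likewise for $Y$ and $Z$. By Thales, the foot $T$ of the perpendicular from $P$ to $m$ then lies on all three circles. When $T=P$, the centers (the midpoints of $PX$, $PY$, $PZ$) lie on $m$, which gives the tangency.

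You instead invert at $P$ and apply a second polarity, with respect to $\sigma$. The two arguments correspond exactly under your inversion. The circle on diameter $PX$ is the inverse of the $\sigma$-polar of $X$, and your concurrence point, the $\sigma$-pole of $m$, inverts to the paper's $T$.

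The paper's version is shorter and more elementary: it needs no inversion and no identification of $X^*$ as a pole, and it names the second common point explicitly. Yours packages the concurrent/tangent dichotomy as a single projective fact (the pole of $m$ is either finite or at infinity), so it treats the boundary cases more uniformly.

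Your discussion of degenerate positions goes beyond the paper, which ignores them. One correction there: a diametral chord $AD$ makes $P,A',D'$ collinear. That case is therefore one of the degenerate ones you exclude (with $(PA'D')$ read as a line), not one where the statement holds verbatim for a genuine circle.
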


\begin{figure}[ht]
    \centering
    {
\begin{center}

\begin{tikzpicture}[scale=0.4][line cap=round,line join=round,>=triangle 45,x=1cm,y=1cm]
\clip(-14.311408692363601,-7.735286126914523) rectangle (2.3093543269357473,10.83969962268244);
\draw [line width=0.4pt] (-2.32,2.47) circle (3.616586655959378cm);
\draw [line width=0.4pt,domain=-14.311408692363601:2.3093543269357473] plot(\x,{(--18.047982596217707--3.413531462690265*\x)/-1.1947811486992483});
\draw [line width=0.4pt,domain=-14.311408692363601:2.3093543269357473] plot(\x,{(--25.310431192656193--2.638627794442613*\x)/2.4733261819780954});
\draw [line width=0.4pt,domain=-14.311408692363601:2.3093543269357473] plot(\x,{(--24.68298710171317--1.4957248677613277*\x)/3.2927961005843938});
\draw [line width=0.4pt,domain=-14.311408692363601:2.3093543269357473] plot(\x,{(--18.262246734901478-1.3417634719240952*\x)/3.3584773075716368});
\draw [line width=0.4pt,domain=-14.311408692363601:2.3093543269357473] plot(\x,{(--8.958572750827663--1.6499934556406353*\x)/-3.2182636057983984});
\draw [line width=0.4pt,domain=-14.311408692363601:2.3093543269357473] plot(\x,{(--2.5437296658023563-0.7820476671265066*\x)/-3.5310197516306});
\draw [line width=0.4pt] (-3.8157248677613276,5.762796100584394)-- (-3.969993455640635,-0.7482636057983982);
\draw [line width=0.4pt] (-5.733531462690265,1.2752188513007519)-- (-0.9782365280759046,5.828477307571637);
\draw [line width=0.4pt] (-1.5379523328734932,-1.0610197516306)-- (-4.958627794442613,4.943326181978096);
\draw [line width=0.4pt] (-5.541246879058617,0.725855081432508)-- (-0.3980074663591553,2.526056884376281);
\draw [line width=0.4pt] (-0.3980074663591553,2.526056884376281)-- (-2.470812761028924,-1.5168887521274033);
\draw [line width=0.4pt] (-0.3980074663591553,2.526056884376281)-- (0.6598941647167574,5.174018835909023);
\draw [line width=0.4pt] (-0.3980074663591553,2.526056884376281)-- (-4.03174682356708,5.932154064734805);
\draw [line width=0.4pt] (-0.3980074663591553,2.526056884376281)-- (-2.2013679520760743,6.496104119202485);
\draw [line width=0.4pt] (-0.3980074663591553,2.526056884376281)-- (0.30600393050724733,-0.6526216696333849);
\draw [line width=0.4pt,dash pattern=on 1pt off 1pt] (-5.5146080256293555,2.5964884832399786) circle (5.117085292746508cm);
\draw [line width=0.4pt,dash pattern=on 1pt off 1pt] (-4.378693351985692,5.651707260626379) circle (5.061180731269999cm);
\draw [line width=0.4pt,dash pattern=on 1pt off 1pt] (-6.677796506843572,-0.5320874317499927) circle (6.984840517237572cm);
\draw [line width=0.4pt,dash pattern=on 1pt off 1pt] (-12.95758554732799,-3.590231747876263)-- (-8.359379237612231,8.777357636876479);
\draw [line width=0.4pt] (-9.43445918599225,5.885763292957817)-- (-0.3980074663591553,2.526056884376281);
\begin{scriptsize}
\draw [fill=black] (-3.969993455640635,-0.7482636057983982) circle (2.5pt);
\draw[color=black] (-3.809928969955937,-0.32305748826026176) node {$B$};
\draw [fill=black] (-5.733531462690265,1.2752188513007519) circle (2.5pt);
\draw[color=black] (-5.566755464870001,1.6903841126524801) node {$A$};
\draw [fill=black] (-1.5379523328734932,-1.0610197516306) circle (2.5pt);
\draw[color=black] (-1.4609137688910645,-0.5401933471822242) node {$C$};
\draw [fill=black] (-3.88,3.05) circle (2.5pt);
\draw [fill=black] (-0.9782365280759046,5.828477307571637) circle (2.5pt);
\draw[color=black] (-0.8292458156635358,6.250237150013691) node {$D$};
\draw [fill=black] (-3.8157248677613276,5.762796100584394) circle (2.5pt);
\draw[color=black] (-3.573053487495614,6.250237150013691) node {$E$};
\draw [fill=black] (-4.958627794442613,4.943326181978096) circle (2.5pt);
\draw[color=black] (-5.310140358871317,5.421172961402561) node {$F$};
\draw [fill=black] (-0.3980074663591553,2.526056884376281) circle (2.5pt);
\draw[color=black] (0.0985164906393968,2.6378860424937702) node {$P$};
\draw [fill=black] (0.6598941647167574,5.174018835909023) circle (2.5pt);
\draw[color=black] (0.8881014321738077,5.598829573247803) node {$D'$};
\draw [fill=black] (-2.470812761028924,-1.5168887521274033) circle (2.5pt);
\draw[color=black] (-2.330759086887115,-1.8929028062563102) node {$B'$};
\draw [fill=black] (-4.03174682356708,5.932154064734805) circle (2.5pt);
\draw[color=black] (-4.145502570108062,6.4871126324740125) node {$F'$};
\draw [fill=black] (-2.2013679520760743,6.496104119202485) circle (2.5pt);
\draw[color=black] (-1.9741439808884316,6.921384350317937) node {$E'$};
\draw [fill=black] (0.30600393050724733,-0.6526216696333849) circle (2.5pt);
\draw[color=black] (0.5525278320216831,-0.22435937056846073) node {$C'$};
\draw [fill=black] (-5.541246879058617,0.725855081432508) circle (2.5pt);
\draw[color=black] (-5.310140358871318,1.1574142771167544) node {$A'$};
\draw [fill=black] (-9.43445918599225,5.885763292957817) circle (2.5pt);
\draw[color=black] (-9.277804690081732,6.309456020628771) node {$T$};
\draw [fill=black] (-8.359379237612231,8.777357636876479) circle (2.5pt);
\draw[color=black] (-8.192125395471916,9.211180680767722) node {$X$};
\draw [fill=black] (-12.95758554732799,-3.590231747876263) circle (2.5pt);
\draw[color=black] (-12.79145767990986,-3.165563277784133) node {$Z$};
\draw [fill=black] (-10.631208584899554,2.666920082103675) circle (2.5pt);
\draw[color=black] (-10.481921725921708,3.0918973838760553) node {$Y$};

\draw[color=black] (-3.210140358871318,3.1574142771167544) node {$Q$};

\draw[color=black] (1.810140358871318,3.1574142771167544) node {$\omega$};
\end{scriptsize}
\end{tikzpicture}

\end{center}
}
    \caption{The configuration of Lemma~\ref{lemma1}.}
    \label{fig:lemma1}
\end{figure}

\begin{proof}
Let $X$ be the intersection of the tangents to $\omega$ at $A$ and $D$, and define $Y$ and $Z$ analogously for the pairs $(B,E)$ and $(C,F)$.
Let $Q$ be the common point of the lines $AD$, $BE$, and $CF$.
By La Hire's theorem, the poles of these three lines lie on the polar $p$
of $Q$. Hence $X,Y,Z$ are collinear on $p$.

Let $T$ be the orthogonal projection of $P$ onto $p$. Since $C'$ and $F'$ are the orthogonal projections of $P$ onto the two tangents through $Z$, we have
\[
\arcangle PTZ=\arcangle PC'Z=\arcangle PF'Z=90^\circ.
\]
Thus $P,T,C',F',Z$ lie on the circle with diameter $PZ$, and consequently
\[
T\in(PC'F').
\]
Applying the same argument to $X$ and $Y$, we obtain
\[
T\in(PA'D')\cap(PB'E').
\]
Therefore, all three circles pass through $P$ and $T$.

If $P\neq T$, then $T$ is a second common point distinct from $P$. If $P=T$, then $P\in p$. Since the three circles have diameters $PX$, $PY$, and $PZ$, their centers lie on $p$, and hence they have the same tangent at $P$, namely the line through $P$ perpendicular to $p$. Thus, they are tangent to one another at $P$.
\end{proof}

We now introduce the main tool used in the proof, a transformation called a dilatation. We assign a signed radius to each oriented circle, taking the radius to be positive for counterclockwise orientation and negative for clockwise orientation. A dilatation with parameter $c$ replaces the signed radius $r$ of every circle by $r+c$, while leaving its center unchanged. The transformation preserves tangency of oriented circles. By choosing $c$ appropriately --- specifically, $c=-r$, where $r$ is the radius of a chosen circle --- we can transform a chosen circle into a point circle. For further background on dilatations and proofs of these properties, see~\cite{laguerre}.
Now we are ready to prove our main result.

\begin{proof}[Proof of Theorem~\ref{thm:main}]
    Let us first apply a dilatation that transforms the circle $\Omega_1$ into a point, as shown in Figure~\ref{kuk}(b). Denote this point by $O_1$, and apply an inversion centered at $O_1$, as shown in Figure~\ref{after_inv}. From this point onward, all objects are understood to be their images under this inversion.

         \begin{figure}[H]
        \centering
        \pgfplotsset{compat=1.15}
\begin{center}
\definecolor{qqqqff}{rgb}{0,0,1}
\definecolor{ccqqqq}{rgb}{0.8,0,0}
\definecolor{qqwuqq}{rgb}{0,0.39215686274509803,0}
\begin{tikzpicture}[scale=0.8][line cap=round,line join=round,>=triangle 45,x=1cm,y=1cm]
\clip(-4.528894067095636,-1.6645266498159859) rectangle (4.962813322477587,6.891984201939667);
\draw [line width=0.4pt,color=qqwuqq,domain=-4.528894067095636:4.962813322477587] plot(\x,{(-24.9248-6.4*\x)/-2.24});
\draw [line width=0.4pt,color=qqwuqq,domain=-4.528894067095636:4.962813322477587] plot(\x,{(-8.7934-0.14*\x)/8.82});
\draw [line width=0.4pt,color=qqwuqq,domain=-4.528894067095636:4.962813322477587] plot(\x,{(-23.0434--6.54*\x)/-6.58});
\draw [line width=0.4pt] (-1.0219358793185966,1.3010242200945619) circle (2.281499745812552cm);
\draw [line width=0.4pt,color=ccqqqq] (-1.4009679396592982,1.785512110047281) circle (1.1407498729062762cm);
\draw [line width=0.4pt,color=qqqqff,domain=-4.528894067095636:4.962813322477587] plot(\x,{(--3.482606527563674--1.0767063000660613*\x)/0.3768472050231211});
\draw [line width=0.4pt,color=qqqqff,domain=-4.528894067095636:4.962813322477587] plot(\x,{(--3.7800809896937597--0.41515919558255554*\x)/1.062522053822381});
\draw [line width=0.4pt,color=qqqqff,domain=-4.528894067095636:4.962813322477587] plot(\x,{(--1.6193307478856092-0.8041689895645884*\x)/0.8090874543325672});
\draw [line width=0.4pt,color=qqqqff,domain=-4.528894067095636:4.962813322477587] plot(\x,{(-0.764023128248946-1.1029217595468621*\x)/-0.29133119443983024});
\draw [line width=0.4pt,color=qqqqff,domain=-4.528894067095636:4.962813322477587] plot(\x,{(-0.7098915684352782--0.0181048601998286*\x)/-1.1406061925891993});
\draw [line width=0.4pt,color=qqqqff,domain=-4.528894067095636:4.962813322477587] plot(\x,{(--2.0536383636948337--1.0633643324049913*\x)/-0.4129970570168411});
\draw [line width=0.4pt,color=qqwuqq] (-3.4608188113277127,2.3296112808331007) circle (3.2712497018579105cm);
\draw [line width=0.4pt,color=qqwuqq] (1.020877577399348,2.726125821278767) circle (3.7388433782008277cm);
\draw [line width=0.4pt,color=qqwuqq] (-0.7509905241252501,0.1220168873958321) circle (2.926719356544928cm);
\draw [line width=0.4pt,dash pattern=on 1pt off 1pt,domain=-4.528894067095636:4.962813322477587] plot(\x,{(-4.295107785311008-2.6219658361303377*\x)/0.16387286475814622});
\draw [line width=0.4pt,dash pattern=on 1pt off 1pt,domain=-4.528894067095636:4.962813322477587] plot(\x,{(--8.083842536808245--2.30251505303044*\x)/1.7556677279357102});
\draw [line width=0.4pt,dash pattern=on 1pt off 1pt,domain=-4.528894067095636:4.962813322477587] plot(\x,{(-4.571245598147934--1.7883618312149017*\x)/-3.416092360224872});
\draw [line width=0.4pt,dash pattern=on 1pt off 1pt] (-1.4190727998591268,0.6449059174580818)-- (-1.8161271352418538,2.848034163869662);
\draw [line width=0.4pt,dash pattern=on 1pt off 1pt] (-2.4643322720642895,1.37251505303044)-- (-0.5967989500947098,2.5945995643798483);
\draw [line width=0.4pt,dash pattern=on 1pt off 1pt] (-0.2980461801124361,1.4941809156074508)-- (-2.4776742397253595,2.162359315070402);
\begin{scriptsize}
\draw [fill=black] (-1.98,5.47) circle (2.5pt);
\draw[color=black] (-1.8952946215175612,5.704177105056501) node {$A$};
\draw [fill=black] (-4.22,-0.93) circle (2.5pt);
\draw[color=black] (-4.131166803885885,-0.7024566482680827) node {$B$};
\draw[color=qqwuqq] (-1.304078419449014,6.714619341319104) node {$\omega_3$};
\draw [fill=black] (4.6,-1.07) circle (2.5pt);
\draw[color=black] (4.683329299681547,-0.8421986596661022) node {$C$};
\draw[color=qqwuqq] (-3.851682781089845,-1.1324320679542963) node {$\omega_1$};
\draw[color=qqwuqq] (-2.884238086795859,6.714619341319104) node {$\omega_5$};
\draw[color=black] (-2.368267583172399,3.4145579952274137) node {$\gamma$};
\draw [fill=black] (-1.8161271352418538,2.848034163869662) circle (2.5pt);
\draw[color=black] (-1.7018056826587642,3.1135752014470643) node {$X_4$};
\draw [fill=black] (-2.4643322720642895,1.37251505303044) circle (2.5pt);
\draw[color=black] (-2.3467688121880883,1.640909389021782) node {$X_2$};
\draw [fill=black] (-0.2980461801124361,1.4941809156074508) circle (2.5pt);
\draw[color=black] (-0.17539294277269651,1.7591526294354909) node {$X_6$};
\draw[color=ccqqqq] (-1.9060440070097169,2.640602239792229) node {$\Omega_2$};
\draw [fill=black] (-2.4776742397253595,2.162359315070402) circle (2.5pt);
\draw[color=black] (-2.357518197680244,2.4256145299491223) node {$X_3$};
\draw [fill=black] (-0.5967989500947098,2.5945995643798483) circle (2.5pt);
\draw[color=black] (-0.4763757365530478,2.855589949635336) node {$X_5$};
\draw [fill=black] (-1.4190727998591268,0.6449059174580818) circle (2.5pt);
\draw[color=black] (-1.304078419449014,0.9099511755552188) node {$X_1$};
\draw[color=qqqqff] (-0.6591152899196897,6.714619341319104) node {$\Gamma_3$};
\draw[color=qqqqff] (-3.615196300262426,2.4256145299491223) node {$\Gamma_4$};
\draw[color=qqqqff] (-3.6796926132153587,6.12340313925056) node {$\Gamma_5$};
\draw[color=qqqqff] (-0.7451103738569329,-1.4226654762424906) node {$\Gamma_6$};
\draw[color=qqqqff] (-3.9161790940427776,0.9421993320316847) node {$\Gamma_1$};
\draw[color=qqqqff] (-4.367653284713304,5.886916658423143) node {$\Gamma_2$};
\draw [fill=ccqqqq] (-0.8096066868098687,1.904269333579587) circle (2.5pt);
\draw[color=ccqqqq] (-0.6913634463961559,2.167629278137394) node {$O_1$};
\draw[color=qqwuqq] (-4.217161887823129,5.392444925783997) node {$\omega_6$};
\draw[color=qqwuqq] (-0.6913634463961559,5.768673418009434) node {$\omega_2$};
\draw[color=qqwuqq] (-2.7767442318743045,1.9741403392785977) node {$\omega_4$};
\draw [fill=black] (-1.78,2.27) circle (2.5pt);
\draw [fill=black] (-1.88,3.87) circle (2.5pt);
\draw[color=black] (-1.7555526101195411,4.102518666725356) node {$A'$};
\draw [fill=black] (-3,0.67) circle (2.5pt);
\draw[color=black] (-2.873488701303703,0.8992017900630633) node {$B'$};
\draw [fill=black] (1.41,0.6) circle (2.5pt);
\draw[color=black] (1.5337593504800129,0.8347054771101313) node {$C'$};
\draw [fill=black] (-1.646639371790485,1.9076004604549774) circle (2.5pt);
\end{scriptsize}
\end{tikzpicture}
\end{center}
        \caption{After inversion.}
        \label{after_inv}
    \end{figure}
    
    Denote the points of tangency of the lines $\Gamma_1$, $\Gamma_2$, $\Gamma_3$, $\Gamma_4$, $\Gamma_5$, and $\Gamma_6$ with the circle $\Omega_2$ by $X_1$, $X_2$, $X_3$, $X_4$, $X_5$, and $X_6$, respectively. Moreover, let
    $A=\omega_3\cap\omega_5$,
    $B=\omega_3\cap\omega_1$,
    $C=\omega_1\cap\omega_5$,
    $A'=\Gamma_3\cap\Gamma_5$,
    $B'=\Gamma_3\cap\Gamma_1$, and
    $C'=\Gamma_1\cap\Gamma_5$.

    
    The points $S_1$, $S_2$, $S_3$, $S_4$, $S_5$, and $S_6$ are the reflections of $O_1$ across the lines $\Gamma_1$, $\Gamma_2$, $\Gamma_3$, $\Gamma_4$, $\Gamma_5$, and $\Gamma_6$.
    
    Under the inversion, the desired concurrence is equivalent to showing that the circles
    \[
    (O_1S_1S_4),\qquad (O_1S_2S_5),\qquad (O_1S_3S_6)
    \]
    have a second common point or are tangent to one another at $O_1$.

    For each $i$, the point $S_i$ is the reflection of $O_1$ across the line
    $\Gamma_i$. Hence the homothety centered at $O_1$ with ratio $1/2$ sends $S_i$ to the orthogonal projection of $O_1$ onto $\Gamma_i$. Applying Lemma~\ref{lemma1} with
    \[
    (A,B,C,D,E,F)=(X_1,X_6,X_5,X_4,X_3,X_2),
    \]
    it is therefore enough to prove that the lines
    \[
    X_1X_4,\qquad X_3X_6,\qquad X_2X_5
    \]
    are concurrent.

    Observe that, since $X_4$ is a center of homothety of the circles $\Omega_2$ and $\omega_4$, and since $AB\parallel A'B'$ and $A'C'\parallel AC$, this homothety maps $A'$ to $A$. Therefore, the points $A$, $A'$, and $X_4$ are collinear. Analogously, the points $B$, $B'$, and $X_2$ are collinear, as are the points $C$, $C'$, and $X_6$.
    
    Let $\gamma$ be the incircle of the triangle $ABC$. By Monge's theorem~\cite{chen}, applied to the circles $\gamma$, $\omega_4$, and $\Omega_2$, the positive center of homothety of $\gamma$ and $\Omega_2$ lies on the line $AX_4$, since the center of positive homothety of the circles $\Omega_2$ and $\omega_4$ is $X_4$, and the center of positive homothety of the circles $\gamma$ and $\omega_4$ is $A$. Analogously, it lies on the lines $BX_2$ and $CX_6$. Thus, the lines $AX_4$, $BX_6$, and $CX_2$ are concurrent. Consequently, the lines $A'X_4$, $B'X_2$, and $C'X_6$ are concurrent. By Steinbart's theorem~\cite{st}, the lines $X_1X_4$, $X_3X_6$, and $X_2X_5$ are therefore concurrent, which completes the proof.
\end{proof}


\section*{Acknowledgements}
The author would like to thank Konstanty Smolira for drawing his attention to this problem.

\end{document}